\numberwithin{equation}{section}
\newtheorem{theorem}{Theorem}[section]
\newcommand{\gaussian}[2]
{\genfrac{(}{)}{0pt}{}{#1}{#2}_{\textstyle q}}
\newcommand{\gaussianrec}[2]
{\genfrac{(}{)}{0pt}{}{#1}{#2}_{\textstyle q^{-1}}}
\title{The q-Binomial Coefficient for Negative Arguments\\
 and Some q-Binomial Summation Identities}
\author{M.J. Kronenburg}
\date{}
\begin{document}

\maketitle

\begin{abstract}
Using a property of the q-shifted factorial, an identity for
q-binomial coefficients is proved, which is used to derive
the formulas for the q-binomial coefficient for negative arguments.
The result is in agreement with an earlier paper
about the normal binomial coefficient for negative arguments.
Some new q-binomial summation identities are derived,
and the formulas for negative arguments transform some of these
summation identities into each other.
One q-binomial summation identity is transformed
into a new q-binomial summation identity.
\end{abstract}

\noindent
\textbf{Keywords}: q-binomial coefficient.\\
\textbf{MSC 2010}: 11B65

\section{Definitions and Basic Identities}

Let the following definition of the q-binomial coefficient,
also called the Gaussian polynomial, be given.\\
\begin{equation}\label{gaussdef}
 \gaussian{m+p}{m} = \prod_{j=1}^m \frac{1-q^{p+j}}{1-q^j}
\end{equation}
Let the q-shifted factorial, also called the q-Pochhammer symbol, be given by \cite{NT}:
\begin{equation}
 (a;q)_k = \prod_{j=0}^{k-1} (1-aq^j)
\end{equation}
Then the q-binomial coefficient for integer $k\geq 0$ is:
\begin{equation}\label{gaussfac}
 \gaussian{n}{k} = \prod_{j=1}^k \frac{1-q^{n-k+j}}{1-q^j} = \frac{(q^{n-k+1};q)_k}{(q;q)_k}
\end{equation}
Let $\Gamma_q(x)$ be the q-gamma function \cite{AAR}.\\
For complex $x$, $y$:
\begin{equation}\label{qbindef}
 \gaussian{x}{y} = \frac{\Gamma_q(x+1)}{\Gamma_q(y+1)\Gamma_q(x-y+1)}
\end{equation}
From this follows the symmetry identity:\\
For complex $x$, $y$:
\begin{equation}\label{gaussymdef}
 \gaussian{x}{y} = \gaussian{x}{x-y}
\end{equation}
The functional equation of the $\Gamma_q(x)$ function is \cite{AAR}:
\begin{equation}\label{gamfun}
 \Gamma_q(x+1) = \frac{1-q^x}{1-q} \Gamma_q(x)
\end{equation}
Combination of (\ref{qbindef}) and (\ref{gamfun}) gives the absorption identity:\\
For complex $x$, $y$:
\begin{equation}\label{absorbe}
 \gaussian{x}{y} = \frac{1-q^x}{1-q^y} \gaussian{x-1}{y-1}
\end{equation}
From definition (\ref{gaussfac}) follows:\\
For integer $n\geq 0$ and integer $k$:
\begin{equation}\label{zero1}
 \gaussian{n}{k} = 0 \textrm{~if~} k>n
\end{equation}
From this and (\ref{gaussymdef}) follows:\\
For integer $n\geq 0$ and integer $k$:
\begin{equation}\label{zero2}
 \gaussian{n}{k} = 0 \textrm{~if~} k<0
\end{equation}

\section{The q-Binomial Coefficient for Negative Arguments}

For deriving the q-binomial coefficient for negative arguments,
the following theorem \cite{KS,NT} is needed.
\begin{theorem}
For integer $k\geq 0$:
\begin{equation}
 (a;q)_k = (-a)^k q^{k(k-1)/2} (\frac{q^{1-k}}{a};q)_k
\end{equation}
\end{theorem}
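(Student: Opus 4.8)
The plan is to prove the identity directly from the product definition $(a;q)_k = \prod_{j=0}^{k-1}(1-aq^j)$ by reversing the order of the factors appearing in the $q$-shifted factorial on the right-hand side. No induction is needed; the whole argument is a reindexing of a finite product together with some elementary algebra.

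First I would expand the inner $q$-shifted factorial on the right:
\[
 \left(\frac{q^{1-k}}{a};q\right)_k = \prod_{j=0}^{k-1}\left(1-\frac{q^{1-k+j}}{a}\right).
\]
The key step is the substitution $i = k-1-j$, under which $j$ running from $0$ to $k-1$ corresponds to $i$ running from $k-1$ down to $0$, and the exponent $1-k+j$ becomes $-i$. Hence the product equals $\prod_{i=0}^{k-1}\bigl(1-q^{-i}/a\bigr)$.

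Next I would extract the factor $-q^{-i}/a$ from each term using the elementary identity $1 - q^{-i}/a = -q^{-i}a^{-1}(1-aq^{i})$. Collecting the resulting constants and using $\sum_{i=0}^{k-1} i = k(k-1)/2$ gives
\[
 \left(\frac{q^{1-k}}{a};q\right)_k = (-1)^k a^{-k} q^{-k(k-1)/2}\prod_{i=0}^{k-1}(1-aq^{i}) = (-1)^k a^{-k} q^{-k(k-1)/2}(a;q)_k,
\]
and multiplying through by $(-a)^k q^{k(k-1)/2}$ yields the stated formula.

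The only real obstacle is bookkeeping: keeping the reindexing, the $k$ factors of $-1$, and the accumulated power of $q$ mutually consistent. As a sanity check I would verify the degenerate cases $k=0$ (both sides equal $1$ by the empty-product convention) and $k=1$ (both sides equal $1-a$).
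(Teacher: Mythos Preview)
Your proof is correct and follows essentially the same approach as the paper: both arguments expand the product, factor out the constant $-a$ (or equivalently $-q^{-i}/a$) from each term, use $\sum_{i=0}^{k-1} i = k(k-1)/2$ to collect the powers of $q$, and reverse the order of the factors. The only cosmetic difference is that you reindex first and then factor, while the paper factors first and distributes $q^{k(k-1)/2}=\prod_{j=0}^{k-1}q^{k-j-1}$ before reindexing.
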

\begin{proof}
From:
\begin{equation}
 k(k-1)/2 = \sum_{j=0}^{k-1} j = \sum_{j=0}^{k-1} (k-j-1)
\end{equation}
follows:
\begin{equation}
 q^{k(k-1)/2} = \prod_{j=0}^{k-1} q^j = \prod_{j=0}^{k-1} q^{k-j-1}
\end{equation}
This is used in:
\begin{equation}
 (-a)^k q^{k(k-1)/2} \prod_{j=0}^{k-1} ( 1- \frac{q^{1-k}}{a} q^j ) 
 = q^{k(k-1)/2} \prod_{j=0}^{k-1} ( q^{1-k+j} - a) 
 = \prod_{j=0}^{k-1} ( 1 - a q^{k-j-1} ) = \prod_{j=0}^{k-1} ( 1 - a q^j )
\end{equation}
\end{proof}
\begin{theorem}\label{trans1}
For integer $k\geq 0$:
\begin{equation}
 \gaussian{n}{k} = (-1)^k q^{nk-k(k-1)/2} \gaussian{-n+k-1}{k}
\end{equation}
\end{theorem}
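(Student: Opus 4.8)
The plan is to reduce both sides to the q-shifted-factorial form (\ref{gaussfac}) and then apply the preceding theorem (the one asserting $(a;q)_k = (-a)^k q^{k(k-1)/2}(q^{1-k}/a;q)_k$). Note that the statement does not assume $n$ to be a nonnegative integer, so one must work with (\ref{gaussfac}), which is a rational expression in $q^{n}$ and hence makes sense for arbitrary $n$, rather than with the product (\ref{gaussdef}).

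First I would rewrite the right-hand side. By (\ref{gaussfac}), since $(-n+k-1)-k+1=-n$,
\begin{equation}
 \gaussian{-n+k-1}{k} = \frac{(q^{-n};q)_k}{(q;q)_k},
\end{equation}
while $\gaussian{n}{k} = (q^{n-k+1};q)_k/(q;q)_k$. Hence it suffices to prove the factorial identity
\begin{equation}
 (q^{n-k+1};q)_k = (-1)^k q^{nk-k(k-1)/2}\,(q^{-n};q)_k .
\end{equation}

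Next I would invoke the preceding theorem with $a=q^{n-k+1}$, which is legitimate because that theorem holds for a general parameter $a$. It gives
\begin{equation}
 (q^{n-k+1};q)_k = (-q^{n-k+1})^k q^{k(k-1)/2}\Bigl(\tfrac{q^{1-k}}{q^{n-k+1}};q\Bigr)_k
 = (-1)^k q^{(n-k+1)k+k(k-1)/2}(q^{-n};q)_k ,
\end{equation}
using $1-k-(n-k+1)=-n$. The only remaining work is the exponent bookkeeping: one checks that $(n-k+1)k+k(k-1)/2$ and $nk-k(k-1)/2$ are both equal to $nk-\tfrac12 k^2+\tfrac12 k$. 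Dividing through by $(q;q)_k$ then yields the claim.

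I do not expect a serious obstacle here; the argument is essentially a single substitution into the preceding theorem. The only points requiring a little care are the observation that (\ref{gaussfac}) must be used in place of (\ref{gaussdef}) so that the identity is valid for non-integer $n$, and the final exponent simplification.
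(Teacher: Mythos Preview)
Your argument is correct and is essentially the same as the paper's: both express the two $q$-binomial coefficients via (\ref{gaussfac}), apply the preceding theorem with $a=q^{\,n-k+1}$, and then simplify the exponent $(n-k+1)k+k(k-1)/2=nk-k(k-1)/2$. Your additional remark that (\ref{gaussfac}) must be used so the identity holds for arbitrary $n$ is a helpful clarification but does not change the route.
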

\begin{proof}
Using (\ref{gaussfac}) and the previous theorem and $-n=(-n+k-1)-k+1$:
\begin{equation}
\begin{split}
 & \gaussian{n}{k} = \frac{(q^{n-k+1};q)_k}{(q;q)_k} = (-1)^k q^{k(n-k+1)+k(k-1)/2}\frac{(q^{-n};q)_k}{(q;q)_k} \\
 & = (-1)^k q^{nk-k(k-1)/2} \gaussian{-n+k-1}{k} \\
\end{split}
\end{equation}
\end{proof}
\begin{theorem}\label{trans2}
For integer $k\leq n$:
\begin{equation}
 \gaussian{n}{k} = (-1)^{n-k} q^{(n-k)(n+k+1)/2} \gaussian{-k-1}{n-k}
\end{equation}
\end{theorem}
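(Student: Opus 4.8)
The plan is to reduce Theorem~\ref{trans2} to Theorem~\ref{trans1} by exploiting the hypothesis $k\leq n$, which makes $n-k$ a nonnegative integer. Under this hypothesis the symmetry identity (\ref{gaussymdef}) rewrites $\gaussian{n}{k}$ as a $q$-binomial coefficient whose lower argument $n-k$ is nonnegative, so Theorem~\ref{trans1} becomes applicable with its integer parameter taken to be $n-k$.

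Carrying this out in order: first apply (\ref{gaussymdef}) to get $\gaussian{n}{k}=\gaussian{n}{n-k}$. Then invoke Theorem~\ref{trans1}, where the roles of ``$n$'' and ``$k$'' in that statement are played here by $n$ and $n-k$ respectively; this is legitimate precisely because $n-k\geq 0$. The result is
\begin{equation}
 \gaussian{n}{n-k} = (-1)^{n-k}\, q^{\,n(n-k)-(n-k)(n-k-1)/2}\,\gaussian{-n+(n-k)-1}{n-k} .
\end{equation}
The upper argument on the right collapses immediately, since $-n+(n-k)-1=-k-1$, so the $q$-binomial factor is already $\gaussian{-k-1}{n-k}$, exactly as desired.

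The only remaining step is to simplify the exponent of $q$, which is routine: factoring out $n-k$,
\begin{equation}
 n(n-k)-\frac{(n-k)(n-k-1)}{2}=(n-k)\left(n-\frac{n-k-1}{2}\right)=(n-k)\cdot\frac{n+k+1}{2},
\end{equation}
matching the exponent $(n-k)(n+k+1)/2$ in the claim. I do not expect a genuine obstacle: all the real content is in Theorem~\ref{trans1} and the symmetry identity, and the hypothesis $k\leq n$ is used in exactly one place — to guarantee $n-k\geq 0$ so that Theorem~\ref{trans1} applies. (An alternative route, if one prefers to avoid symmetry, is to start from $\gaussian{n}{k}=\gaussian{n}{n-k}=(q^{k+1};q)_{n-k}/(q;q)_{n-k}$ via (\ref{gaussfac}) and apply the first theorem of this section to $(q^{k+1};q)_{n-k}$ directly; the exponent bookkeeping is the same.) As a consistency check, letting $q\to 1$ recovers the classical identity $\binom{n}{k}=(-1)^{n-k}\binom{-k-1}{n-k}$.
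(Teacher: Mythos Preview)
Your proposal is correct and follows essentially the same approach as the paper: both arguments combine the symmetry identity (\ref{gaussymdef}) with Theorem~\ref{trans1} applied at the nonnegative integer $n-k$, and then perform the same exponent simplification $n(n-k)-(n-k)(n-k-1)/2=(n-k)(n+k+1)/2$. The only cosmetic difference is order: the paper substitutes $k\to n-k$ in Theorem~\ref{trans1} first and then applies symmetry to the left side, whereas you apply symmetry first and then invoke Theorem~\ref{trans1}.
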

\begin{proof}
In the previous theorem replacing $k$ with $n-k$, which makes it valid when $n-k\geq 0$
which is when $k\leq n$, and then applying (\ref{gaussymdef}) to the left side:
\begin{equation}\label{gaussym}
 \gaussian{n}{n-k} = \gaussian{n}{k}
\end{equation}
and using $n(n-k)-(n-k)(n-k-1)/2=(n-k)(n+k+1)/2$
gives this theorem.
\end{proof}
These two transformations can be used to transform one q-binomial
summation identity into another,
and can be used to express the q-binomial coefficient
for negative integer $n$ and integer $k$
into the q-binomial coefficient for nonnegative integer $n$ and $k$.
\begin{theorem}\label{binomnegdef}
For negative integer $n$ and integer $k$:
\begin{equation}
 \gaussian{n}{k} =
 \begin{cases}
   \displaystyle (-1)^k q^{nk-k(k-1)/2} \gaussian{-n+k-1}{k} & \text{if $k\geq 0$} \\
   \displaystyle (-1)^{n-k} q^{(n-k)(n+k+1)/2} \gaussian{-k-1}{n-k} & \text{if $k\leq n$} \\
   0 & \text{otherwise} \\
 \end{cases}
\end{equation}
\end{theorem}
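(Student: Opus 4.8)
The plan is to prove the three cases separately, with the first two reading off directly from Theorems~\ref{trans1} and~\ref{trans2} and the third needing a short self-contained argument. As a preliminary, I would note that for a negative integer $n$ the three conditions $k\geq0$, $k\leq n$, and $n<k<0$ are mutually exclusive (the first forces $k\geq0$, the second forces $k\leq n<0$) and together exhaust all integers $k$, so the case distinction is well-posed and no compatibility check between the first two branches is required here.

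For $k\geq0$: since $n\leq-1$ and $k\geq0$, the integer $-n+k-1$ is $\geq0$, so the right-hand side of Theorem~\ref{trans1} is a genuine q-binomial coefficient with nonnegative integer arguments, and Theorem~\ref{trans1} applied verbatim yields the first line. For $k\leq n$: here $k\leq n\leq-1$, so both $-k-1\geq0$ and $n-k\geq0$ are nonnegative integers; the hypothesis $k\leq n$ of Theorem~\ref{trans2} is exactly this branch, and that theorem gives the second line.

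For the ``otherwise'' branch $n<k<0$, I would iterate the absorption identity~(\ref{absorbe}) in the rearranged form $\gaussian{x}{y}=\frac{1-q^{y+1}}{1-q^{x+1}}\gaussian{x+1}{y+1}$, applied $-k$ times starting from $(x,y)=(n,k)$, to obtain
\begin{equation}
 \gaussian{n}{k} = \left(\prod_{j=1}^{-k}\frac{1-q^{k+j}}{1-q^{n+j}}\right)\gaussian{n-k}{0} .
\end{equation}
The $j=-k$ numerator factor equals $1-q^{0}=0$, while each denominator factor $1-q^{n+j}$ with $1\leq j\leq-k$ has exponent $n+j$ running through the negative integers $n+1,\dots,n-k$ and is therefore nonzero; since $\gaussian{n-k}{0}=1$ by~(\ref{qbindef}), the product vanishes, proving the third line. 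Equivalently one can read this branch straight off~(\ref{qbindef}): for $n<k<0$ the arguments $n+1$, $k+1$, $n-k+1$ are all nonpositive integers, so $\Gamma_q$ (which has only simple poles there) contributes a single pole to the numerator against two poles in the denominator, and the quotient is $0$; this also matches the heuristic that Theorem~\ref{trans1}, if pushed to $k<0$ with $k>n$, would express $\gaussian{n}{k}$ as a unit times $\gaussian{-n+k-1}{k}=0$ via~(\ref{zero2}).

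The main obstacle is precisely this third branch: it is the one range not accessible from a nonnegative-argument transformation, and at those arguments the defining formula~(\ref{qbindef}) is meaningful only as a limit, so the argument must either pass through a known vanishing value (the iterated-absorption route, landing on $\gaussian{n-k}{0}$) or through an explicit pole count for $\Gamma_q$. The only genuine care needed along the absorption route is to verify, as above, that no denominator factor $1-q^{n+j}$ degenerates to zero, which is why one iterates up to $(n-k,0)$ rather than, say, up to $(0,k-n)$.
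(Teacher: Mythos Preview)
Your proof is correct and follows the paper's approach: the first two cases are read off from Theorems~\ref{trans1} and~\ref{trans2}, and the third is handled by iterating the absorption identity~(\ref{absorbe}) along the diagonal until the factor $1-q^{0}=0$ appears, the only cosmetic difference being that the paper iterates downward from a point with lower index $0$ while you iterate upward from $(n,k)$ to $(n-k,0)$ and make the nonvanishing of the denominators explicit. One small nit: you justify $\gaussian{n-k}{0}=1$ via~(\ref{qbindef}), but since $n-k<0$ that quotient of $\Gamma_q$-values is formally indeterminate; cite~(\ref{gaussfac}) (empty product for $k=0$) or the $k=0$ instance of Theorem~\ref{trans1} instead.
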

\begin{proof}
The first two cases are identical to theorems \ref{trans1} and \ref{trans2}.\\
For the third case, from (\ref{absorbe}) follows for integer $n$, $k$:
\begin{equation}
 \gaussian{n-1}{k-1} = \frac{1-q^k}{1-q^n} \gaussian{n}{k}
\end{equation}
When $k=0$ the right side is zero,
so when $n<0$ and $k=0$ this identity produces zeros for $n<k<0$,
which is the third case.
This identity does not produce zeros for all $k<0$
because when $n>0$ a point will be reached where $n=0$ and this expression becomes
$(1-q^k)0/0$ which is undefined.
\end{proof}
The normal binomial coefficients are the q-binomial coefficients with $q=1$,
in which case this theorem reduces to theorem 2.1 in \cite{K15}.\\
For $n=-1$ this theorem results in:
\begin{equation}
 \gaussian{-1}{k} =
\begin{cases}
 (-1)^k q^{-k(k+1)/2} & \textrm{\rm~if~} k\geq 0 \\
 (-1)^{k+1} q^{-k(k+1)/2} & \textrm{\rm~if~} k\leq -1 \\
\end{cases}
\end{equation}
which is in agreement with example 1.4 in \cite{FS}.\\
As an example of the second case of this theorem:
\begin{equation}
 \gaussian{-3}{-5} = q^{-7} \gaussian{4}{2} = q^{-7} (1+q^2)(1+q+q^2)
\end{equation}
which is in agreement with example 1.2 in \cite{FS}.\\
The q-binomial coefficient polynomial is palindromic, 
which means that $a_j=a_{k(n-k)-j}$,
from which follows that it is self-reciprocal,
where $j$ is replaced by $k(n-k)-j$:
\begin{equation}\label{selfrec}
\begin{split}
 & \gaussian{n}{k} = \sum_{j=0}^{k(n-k)} a_j q^j = \sum_{j=0}^{k(n-k)} a_{k(n-k)-j} q^j 
 = \sum_{j=0}^{k(n-k)} a_j q^{k(n-k)-j} \\
 & = q^{k(n-k)} \sum_{j=0}^{k(n-k)} a_j q^{-j} = q^{k(n-k)} \gaussianrec{n}{k} \\
\end{split}
\end{equation}
The theorems above leave all q-binomial coefficients
self-reciprocal.\\
For theorem \ref{trans1}:
\begin{equation}
 q^{k(n-k)} \gaussianrec{n}{k} = (-1)^k q^{nk-k(k-1)/2} q^{-k(n+1)} \gaussianrec{-n+k-1}{k}
\end{equation}
and because $-k(n-k)+nk-k(k-1)/2-k(n+1)=-(nk-k(k-1)/2)$:
\begin{equation}
 \gaussianrec{n}{k} = (-1)^k q^{-(nk-k(k-1)/2)} \gaussianrec{-n+k-1}{k}
\end{equation}
For theorem \ref{trans2}:
\begin{equation}
 q^{k(n-k)} \gaussianrec{n}{k} = (-1)^{n-k} q^{(n-k)(n+k+1)/2} q^{-(n-k)(n+1)} \gaussianrec{-k-1}{n-k}
\end{equation}
and because $-k(n-k)+(n-k)(n+k+1)/2-(n-k)(n+1)=-(n-k)(n+k+1)/2$:
\begin{equation}
 \gaussianrec{n}{k} = (-1)^{n-k} q^{-(n-k)(n+k+1)/2} \gaussianrec{-k-1}{n-k}
\end{equation}
\newpage 

\section{Some q-Binomial Summation Identities}

Some q-binomial summation identities are derived and it is shown
how q-binomial coefficients with negative arguments transform
one summation identity into another.
The following identities are the q-binomial theorem and the q-binomial
theorem for negative powers \cite{wiki2}:
\begin{equation}\label{qbinpos}
 \prod_{k=0}^{n-1} (1+xq^k) = \sum_{k=0}^n q^{\textstyle\binom{k}{2}} \gaussian{n}{k} x^k
\end{equation}
\begin{equation}\label{qbinneg}
 \frac{1}{\prod_{k=0}^{n-1}(1-xq^k)} = \sum_{k=0}^{\infty} \gaussian{n+k-1}{n-1} x^k
\end{equation}
The following is an obvious product rule:
\begin{equation}\label{qbinprod}
 \prod_{k=0}^{a-1} (1+xq^k) \prod_{k=0}^{b-1} (1+xq^{a+k}) = \prod_{k=0}^{a+b-1} (1+xq^k)
\end{equation}
With these three identities some q-binomial summation identities are derived,
using that the coefficients of a product of two polynomials are the convolutions
of the coefficients of the two polynomials.
\begin{theorem}\label{qbinsum1}
The q-analog of the Chu-Vandermonde identity \cite{A84}:
\begin{equation}
 \sum_{k=0}^n q^{(a-k)(n-k)} \gaussian{a}{k} \gaussian{b}{n-k} = \gaussian{a+b}{n}
\end{equation}
\end{theorem}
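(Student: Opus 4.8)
The plan is to obtain this identity as the coefficient of $x^n$ in the product rule (\ref{qbinprod}), after expanding each of its three factors by the q-binomial theorem (\ref{qbinpos}). First I would expand the two factors on the left-hand side of (\ref{qbinprod}). The first one is immediate from (\ref{qbinpos}) with $n$ replaced by $a$, namely $\prod_{k=0}^{a-1}(1+xq^k)=\sum_{i=0}^{a} q^{\binom{i}{2}}\gaussian{a}{i}x^i$. For the second, I would write $\prod_{k=0}^{b-1}(1+xq^{a+k})=\prod_{k=0}^{b-1}\bigl(1+(xq^a)q^k\bigr)$, which is (\ref{qbinpos}) with $n$ replaced by $b$ and $x$ by $xq^a$, so it equals $\sum_{j=0}^{b} q^{\binom{j}{2}+aj}\gaussian{b}{j}x^j$. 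Finally, by (\ref{qbinpos}) with $n$ replaced by $a+b$, the coefficient of $x^n$ in $\prod_{k=0}^{a+b-1}(1+xq^k)$ is $q^{\binom{n}{2}}\gaussian{a+b}{n}$.

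Next I would compare the coefficients of $x^n$ on the two sides of (\ref{qbinprod}). Since the coefficients of a product of polynomials are the convolutions of the coefficients of the factors, putting $i=k$ and $j=n-k$ the left-hand side contributes $\sum_{k=0}^{n} q^{\binom{k}{2}+\binom{n-k}{2}+a(n-k)}\gaussian{a}{k}\gaussian{b}{n-k}$; terms with $k>a$ or $n-k>b$ vanish by (\ref{zero1}), so letting $k$ run from $0$ to $n$ is harmless. Equating this with $q^{\binom{n}{2}}\gaussian{a+b}{n}$ gives the identity up to the q-power factor $q^{\binom{n}{2}}$, which I would then cancel.

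The only step needing care is the exponent bookkeeping: I would check that $k(k-1)+(n-k)(n-k-1)-n(n-1)=2k(k-n)$, hence $\binom{k}{2}+\binom{n-k}{2}-\binom{n}{2}=-k(n-k)$, and adding the leftover term $a(n-k)$ produces exactly $(a-k)(n-k)$, which completes the proof. I do not expect a genuine obstacle here --- the argument is a direct coefficient extraction --- so the ``hard part'' is merely keeping the q-powers straight. As a closing remark, the derivation uses that $a$ and $b$ are nonnegative integers, but each side is a polynomial in $q^a$ and $q^b$ by (\ref{gaussfac}) and (\ref{qbindef}), and two such polynomials agreeing on the grid $a,b\in\{0,1,2,\dots\}$ must coincide identically, so the identity in fact holds for arbitrary complex $a$ and $b$.
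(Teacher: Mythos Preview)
Your proof is correct and follows essentially the same route as the paper: expand both sides of the product rule (\ref{qbinprod}) by the q-binomial theorem (\ref{qbinpos}), compare coefficients of $x^n$, and simplify the exponent via $\binom{k}{2}+\binom{n-k}{2}+a(n-k)-\binom{n}{2}=(a-k)(n-k)$. Your closing remark extending the identity to arbitrary complex $a,b$ by polynomial interpolation is a nice addition not present in the paper.
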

\begin{proof}
Using (\ref{qbinpos}) with (\ref{qbinprod}):
\begin{equation}
 ( \sum_{k=0}^a q^{\textstyle\binom{k}{2}} \gaussian{a}{k} x^k )
 ( \sum_{k=0}^b q^{\textstyle\binom{k}{2}} \gaussian{b}{k} q^{ak} x^k )
 = \sum_{k=0}^{a+b} q^{\textstyle\binom{k}{2}} \gaussian{a+b}{k} x^k
\end{equation}
The coefficients of both sides must be equal:
\begin{equation}
 \sum_{k=0}^n q^{\textstyle\binom{k}{2}} \gaussian{a}{k} q^{\textstyle\binom{n-k}{2}} \gaussian{b}{n-k} q^{a(n-k)}
 = q^{\textstyle\binom{n}{2}} \gaussian{a+b}{n}
\end{equation}
Because:
\begin{equation}
 \binom{k}{2} + \binom{n-k}{2} + a(n-k) - \binom{n}{2} = (a-k)(n-k)
\end{equation}
the theorem is proved.
By replacing $k$ with $n-k$ and interchanging $a$ and $b$, the power of $q$ in the summand
can be replaced by $q^{(b-n+k)k}$.
\end{proof}
\begin{theorem}\label{qbinsum2}
\begin{equation}
 \sum_{k=0}^n q^{(b+1)k} \gaussian{a+k}{a} \gaussian{b+n-k}{b} = \gaussian{n+a+b+1}{n}
\end{equation}
\end{theorem}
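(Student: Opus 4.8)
The plan is to mirror the proof of Theorem~\ref{qbinsum1}, but starting from the q-binomial theorem for negative powers (\ref{qbinneg}) rather than from (\ref{qbinpos}). Rewriting (\ref{qbinneg}) with $n$ replaced by $a+1$ gives the generating function
\[
 \sum_{k=0}^{\infty} \gaussian{a+k}{a} x^k = \frac{1}{\prod_{j=0}^{a}(1-xq^j)} ,
\]
and likewise with $a$ replaced by $b$, and with $a$ replaced by $a+b+1$, one obtains the two companion expansions that will be needed.

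Next I would record the reciprocal form of the product rule (\ref{qbinprod}): applying (\ref{qbinprod}) with $a,b$ replaced by $b+1,a+1$, then substituting $-x$ for $x$ and taking reciprocals, yields
\[
 \frac{1}{\prod_{j=0}^{b}(1-xq^j)} \cdot \frac{1}{\prod_{j=0}^{a}(1-xq^{b+1+j})} = \frac{1}{\prod_{j=0}^{a+b+1}(1-xq^j)} .
\]
The point is that the second factor on the left is the $a$-generating function above with $x$ rescaled to $xq^{b+1}$, hence equals $\sum_{k=0}^{\infty} q^{(b+1)k} \gaussian{a+k}{a} x^k$. Substituting all three expansions into the displayed identity turns it into
\[
 \Bigl( \sum_{k=0}^{\infty} \gaussian{b+k}{b} x^k \Bigr) \Bigl( \sum_{k=0}^{\infty} q^{(b+1)k} \gaussian{a+k}{a} x^k \Bigr) = \sum_{k=0}^{\infty} \gaussian{a+b+1+k}{a+b+1} x^k .
\]

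Finally I would equate the coefficients of $x^n$ on both sides, using, as in Theorem~\ref{qbinsum1}, that the coefficients of a product of two power series are the convolutions of their coefficients. The left side contributes $\sum_{k=0}^{n} q^{(b+1)k} \gaussian{a+k}{a} \gaussian{b+n-k}{b}$, and the right side contributes $\gaussian{a+b+1+n}{a+b+1}$, which equals $\gaussian{n+a+b+1}{n}$ by the symmetry identity (\ref{gaussymdef}); this is the asserted identity.

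I do not expect a genuine difficulty: the whole argument is formal, valid for the series viewed as formal power series in $x$ (or for $|q|<1$, $|x|<1$). The only care needed is in the index bookkeeping — attaching the factor $q^{b+1}$ to the correct one of the two products so that the summand exponent comes out as $q^{(b+1)k}$ and not $q^{(b+1)(n-k)}$ (these give the same sum after reindexing, so either is acceptable), and checking that the ranges in the reciprocal product rule telescope to $\prod_{j=0}^{a+b+1}(1-xq^j)$.
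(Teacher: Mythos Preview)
Your proposal is correct and follows essentially the same approach as the paper's proof: both take the reciprocal of the product rule (\ref{qbinprod}) with $-x$, expand each factor via the $q$-binomial theorem for negative powers (\ref{qbinneg}), and equate coefficients of $x^n$. The only difference is bookkeeping --- you front-load the shifts $a\to a+1$, $b\to b+1$ and the $a\leftrightarrow b$ swap into the setup of the generating functions so that the convolution lands directly on the desired summand, whereas the paper performs these substitutions at the end.
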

\begin{proof}
Using the reciprocal of (\ref{qbinprod}) with $-x$:
\begin{equation}
 \frac{1}{\prod_{k=0}^{a-1} (1-xq^k)} \frac{1}{\prod_{k=0}^{b-1} (1-xq^{a+k})} 
 = \frac{1}{\prod_{k=0}^{a+b-1} (1-xq^k)}
\end{equation}
which with (\ref{qbinneg}) becomes:
\begin{equation}
 ( \sum_{k=0}^{\infty} \gaussian{a+k-1}{a-1} x^k )
 ( \sum_{k=0}^{\infty} \gaussian{b+k-1}{b-1} q^{ak} x^k )
 = \sum_{k=0}^{\infty} \gaussian{a+b+k-1}{a+b-1} x^k
\end{equation}
The coefficients of both sides must be equal:
\begin{equation}
 \sum_{k=0}^n \gaussian{a+k-1}{a-1} \gaussian{b+n-k-1}{b-1} q^{a(n-k)} = \gaussian{a+b+n-1}{a+b-1} = \gaussian{n+a+b-1}{n}
\end{equation}
Replacing $a$ by $a+1$ and $b$ by $b+1$, and then replacing $k$ by $n-k$ and interchanging
$a$ and $b$ gives the theorem.
By replacing $k$ with $n-k$ and interchanging $a$ and $b$, the power of $q$ in the summand
can be replaced by $q^{(a+1)(n-k)}$.
\end{proof}
\begin{theorem}\label{qbinsum3}
\begin{equation}
 \sum_{k=0}^n (-1)^k q^{\textstyle\binom{k}{2}} \gaussian{a}{k} \gaussian{b+n-k}{b}
 = 
\begin{cases}
\displaystyle q^{an} \gaussian{n-a+b}{n} & \textrm{\rm ~if~} a\leq b \\
\displaystyle (-1)^n q^{bn+n(n+1)/2} \gaussian{a-b-1}{n} & \textrm{\rm ~if~} a>b \\
\end{cases}
\end{equation}
\end{theorem}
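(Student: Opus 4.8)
The plan is to follow the generating-function method of Theorems~\ref{qbinsum1} and~\ref{qbinsum2}: interpret the summand as a term in the Cauchy product of two series---here one finite polynomial and one formal power series---and then cancel the factors the two products have in common.

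First I would note that replacing $x$ by $-x$ in~(\ref{qbinpos}) gives
\[
 \prod_{k=0}^{a-1}(1-xq^k) = \sum_{k=0}^{a} (-1)^k q^{\textstyle\binom{k}{2}} \gaussian{a}{k} x^k ,
\]
so $(-1)^k q^{\binom{k}{2}}\gaussian{a}{k}$ is precisely the coefficient of $x^k$ in this finite product. Likewise, replacing $n$ by $b+1$ in~(\ref{qbinneg}) gives
\[
 \frac{1}{\prod_{k=0}^{b}(1-xq^k)} = \sum_{j=0}^{\infty} \gaussian{b+j}{b} x^j ,
\]
so $\gaussian{b+n-k}{b}$ is the coefficient of $x^{n-k}$ there. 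Since the coefficients of a product of formal power series are the convolutions of the coefficients, the left-hand side of the theorem is the coefficient of $x^n$ in
\[
 \frac{\prod_{k=0}^{a-1}(1-xq^k)}{\prod_{k=0}^{b}(1-xq^k)} .
\]

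Next I would cancel the common factors and split into the two cases. If $a\leq b$, the numerator divides the denominator and the quotient is $1/\prod_{k=a}^{b}(1-xq^k)$, a product of $b-a+1$ factors; substituting $x\mapsto xq^a$ in~(\ref{qbinneg}) with $n$ replaced by $b-a+1$ writes this as $\sum_j \gaussian{b-a+j}{b-a} q^{aj} x^j$, whose $x^n$ coefficient is $q^{an}\gaussian{b-a+n}{b-a}$; applying~(\ref{gaussym}) rewrites this as $q^{an}\gaussian{n-a+b}{n}$, the first case. If $a>b$, the denominator divides the numerator and the quotient is $\prod_{k=b+1}^{a-1}(1-xq^k)$, a product of $a-b-1$ factors; substituting $x\mapsto -xq^{b+1}$ in~(\ref{qbinpos}) with $n$ replaced by $a-b-1$ writes this as $\sum_j (-1)^j q^{\binom{j}{2}+(b+1)j}\gaussian{a-b-1}{j} x^j$, whose $x^n$ coefficient is $(-1)^n q^{\binom{n}{2}+(b+1)n}\gaussian{a-b-1}{n}$; since $\binom{n}{2}+(b+1)n = bn + n(n+1)/2$, this is the second case. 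The two cases $a\leq b$ and $a>b$ exhaust all possibilities, so no genuine third case arises here.

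I do not expect a serious obstacle: the steps are all of the same flavor as the earlier proofs. The points that need care are keeping the index shifts in~(\ref{qbinpos}) and~(\ref{qbinneg}) straight when passing to the shortened products, verifying the exponent identity $\binom{n}{2}+(b+1)n = bn+n(n+1)/2$, and invoking the symmetry~(\ref{gaussym}) in the first case so that the result appears with lower index $n$ rather than $b-a$. It is also worth checking the boundary situations: when $a=b$ the first case collapses to the $x^n$ coefficient of $1/(1-xq^a)$, namely $q^{an}$, and when $a=b+1$ the shortened product in the second case is empty, so the coefficient of $x^n$ is $1$ for $n=0$ and $0$ otherwise, consistent with $\gaussian{0}{n}$.
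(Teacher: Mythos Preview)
Your argument is correct and uses the same generating-function/product-cancellation idea as the paper. The one difference worth noting is how the two cases are obtained. The paper writes the quotient uniformly as the single finite product $\prod_{k=0}^{a-b-1}(1+xq^{b+k})$, reads off the coefficient to get the $a>b$ case, and then invokes Theorem~\ref{trans1} (the negative-argument transformation) on the right-hand side to pass to the $a\le b$ case. You instead split at the outset: when $a\le b$ you cancel the numerator completely and expand the surviving denominator via~(\ref{qbinneg}), and when $a>b$ you cancel the denominator completely and expand the surviving numerator via~(\ref{qbinpos}). Your route is a little more self-contained, since it never appeals to the negative-argument theory developed earlier; the paper's route, on the other hand, ties the two cases together by a single application of Theorem~\ref{trans1}, which fits the paper's theme that the negative-argument formulas transform q-binomial identities into one another.
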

\begin{proof}
From (\ref{qbinprod}) replacing $a$ with $b$ and $b$ with $a-b$ gives:
\begin{equation}
 \frac{\prod_{k=0}^{a-1}(1+xq^k)}{\prod_{k=0}^{b-1}(1-(-x)q^k)} = \prod_{k=0}^{a-b-1}(1+xq^{b+k})
\end{equation}
Therefore:
\begin{equation}
 (\sum_{k=0}^a q^{\textstyle\binom{k}{2}} \gaussian{a}{k} x^k)
 (\sum_{k=0}^{\infty} \gaussian{b+k-1}{b-1} (-1)^k x^k)
 = \sum_{k=0}^{a-b} q^{\textstyle\binom{k}{2}} \gaussian{a-b}{k} q^{bk} x^k
\end{equation}
The coefficients of both sides must be equal:
\begin{equation}
 \sum_{k=0}^n q^{\textstyle\binom{k}{2}} \gaussian{a}{k} \gaussian{b+n-k-1}{b-1} (-1)^{n-k}
 = q^{bn+\textstyle\binom{n}{2}} \gaussian{a-b}{n}
\end{equation}
Replacing $b$ by $b+1$ gives the second case of the theorem.
When $a\leq b$ application of theorem \ref{trans1} 
to the right side of the second case gives the first case of the theorem.
\end{proof}
The special cases $b=a-1$ and $a=n$, $b=0$ of this theorem appear in exercise 3.9 in \cite{NT}.

\section{Transforming q-Binomial Summation Identities}

From theorem \ref{qbinsum2} and theorem \ref{trans2} the following
theorem follows.
\begin{theorem}\label{qbinsum4}
\begin{equation}
 \sum_{k=0}^n (-1)^k q^{(a-b)(n-k)+k(k+1)/2} \gaussian{a}{k}\gaussian{b+n-k}{b} =
\begin{cases}
 \displaystyle q^{(a-b)n} \gaussian{n-a+b}{n} & \textrm{\rm ~if~} a\leq b \\
 \displaystyle (-1)^n q^{n(n+1)/2} \gaussian{a-b-1}{n} & \textrm{\rm ~if~} a > b \\
\end{cases}
\end{equation}
\end{theorem}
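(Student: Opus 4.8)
The plan, following the remark preceding the statement, is to derive this identity from theorem~\ref{qbinsum2} by rewriting the factor $\gaussian{a+k}{a}$ there with theorem~\ref{trans2} and then relabelling. In theorem~\ref{trans2} take ``$n$''$=a+k$ and ``$k$''$=a$, which is allowed because $a\le a+k$ for $k\ge0$; this gives
\[
 \gaussian{a+k}{a}=(-1)^k q^{ak+k(k+1)/2}\gaussian{-a-1}{k}.
\]
Substituting this into theorem~\ref{qbinsum2} and combining the powers of $q$ (the $q^{(b+1)k}$ there with $q^{ak+k(k+1)/2}$) turns it into
\[
 \sum_{k=0}^n(-1)^k q^{(a+b+1)k+k(k+1)/2}\gaussian{-a-1}{k}\gaussian{b+n-k}{b}=\gaussian{n+a+b+1}{n}.
\]

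Next I would replace $a$ by $-a-1$ throughout: the first q-binomial becomes $\gaussian{a}{k}$, the exponent $(a+b+1)k$ becomes $(b-a)k$, and the right-hand side becomes $\gaussian{n-a+b}{n}$. Multiplying both sides by $q^{(a-b)n}$ and using $(a-b)n+(b-a)k=(a-b)(n-k)$ converts the left-hand side into exactly the sum in the statement, so that
\[
 \sum_{k=0}^n(-1)^k q^{(a-b)(n-k)+k(k+1)/2}\gaussian{a}{k}\gaussian{b+n-k}{b}=q^{(a-b)n}\gaussian{n-a+b}{n}.
\]

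It then remains to put the right-hand side into the two stated forms. If $a\le b$, then $n-a+b\ge n\ge0$ and $q^{(a-b)n}\gaussian{n-a+b}{n}$ is already the first case. If $a>b$, apply theorem~\ref{trans1} to $\gaussian{n-a+b}{n}$ (this is valid for any integer upper argument, here with ``$k$''$=n$), giving $\gaussian{n-a+b}{n}=(-1)^n q^{(n-a+b)n-n(n-1)/2}\gaussian{a-b-1}{n}$; the exponent $(a-b)n+(n-a+b)n-n(n-1)/2$ then collapses to $n(n+1)/2$, which is the second case.

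The step that needs care is the substitution $a\mapsto-a-1$, since theorem~\ref{qbinsum2} was proved only for $a\ge0$: for fixed $b,n$ both sides of the intermediate identity are Laurent polynomials in $q^a$, so once they agree for all $a\ge0$ they agree for all integers $a$ — which is exactly the sense in which the Section~2 formulas carry the summation identities over to negative arguments. One can also sidestep this with a direct generating-function proof in the style of theorem~\ref{qbinsum3}: by~(\ref{qbinpos}) and~(\ref{qbinneg}) the left-hand side is the coefficient of $x^n$ in $\prod_{j=1}^a(1-q^jx)\big/\prod_{j=a-b}^a(1-q^jx)$, which after cancellation is $1/\prod_{j=a-b}^0(1-q^jx)$ when $a\le b$ and the polynomial $\prod_{j=1}^{a-b-1}(1-q^jx)$ when $a>b$, and extracting the coefficient of $x^n$ from these (using~(\ref{qbinneg}), (\ref{qbinpos}) and~(\ref{gaussymdef}) once more) gives the two cases. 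In either route the only real work is bookkeeping the $q$-exponents through the relabellings and the $a\le b$ versus $a>b$ split; no idea beyond theorems~\ref{trans1} and~\ref{trans2} is needed.
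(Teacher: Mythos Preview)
Your argument is correct and follows essentially the same route as the paper: derive the identity from theorem~\ref{qbinsum2} by passing to a negative first parameter via theorem~\ref{trans2}, then recover the $a>b$ case from the $a\le b$ case using theorem~\ref{trans1}. Your version is in fact a bit more careful than the paper's, since you explicitly justify the substitution $a\mapsto -a-1$ (which the paper performs tacitly) via the Laurent-polynomial-in-$q^a$ argument, and your alternative generating-function computation is a valid independent check.
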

\begin{proof}
In theorem \ref{qbinsum2} replacing $a$ by $-a$ and using theorem \ref{trans2}:
\begin{equation}
 \gaussian{-a+k}{-a} = (-1)^k q^{-ak+k(k+1)/2} \gaussian{a-1}{k}
\end{equation}
and replacing $a$ by $a+1$ gives the first case of this theorem.
For the second case of the theorem, using theorem \ref{trans1} when $a>b$:
\begin{equation}
 \gaussian{n-a+b}{n} = (-1)^n q^{(n-a+b)n-n(n-1)/2} \gaussian{a-b-1}{n}
\end{equation}
gives the second case of the theorem.
This theorem is identical to theorem \ref{qbinsum3} except for the power of $q$ in the summand.
\end{proof}
Some summation identities from the previous section can be transformed into each other.
Theorem \ref{qbinsum3} can be transformed into theorem \ref{qbinsum2} using theorem \ref{trans1}
by replacing $a$ with $-a$:
\begin{equation}
 \gaussian{-a}{k} = (-1)^k q^{-ak-\textstyle\binom{k}{2}} \gaussian{a+k-1}{a-1}
\end{equation}
which with the first case of theorem \ref{qbinsum3} gives:
\begin{equation}
 \sum_{k=0}^n q^{a(n-k)} \gaussian{a+k-1}{a-1} \gaussian{b+n-k}{b} = \gaussian{n+a+b}{n}
\end{equation}
Replacing $a$ with $a+1$ and $k$ with $n-k$ and interchanging $a$ and $b$ gives theorem \ref{qbinsum2}.\\
Theorem \ref{qbinsum3} can be transformed into theorem \ref{qbinsum1} using theorem \ref{trans2}
by replacing $b$ with $-b$:
\begin{equation}
 \gaussian{-b+n-k}{-b} = (-1)^{n-k} q^{(n-k)(-2b+n-k+1)/2} \gaussian{b-1}{n-k} 
\end{equation}
which with the second case of theorem \ref{qbinsum3} by replacing $b$ by $b+1$ and using\\
$k(k-1)/2+(n-k)(-2(b+1)+n-k+1)/2+bn-n(n-1)/2=k(b-n+k)$
gives:
\begin{equation}
 \sum_{k=0}^n q^{k(b-n+k)} \gaussian{a}{k}\gaussian{b}{n-k} = \gaussian{a+b}{n}
\end{equation}
Replacing $k$ by $n-k$ and interchanging $a$ and $b$ gives theorem \ref{qbinsum1}.\\

\pdfbookmark[0]{References}{}


\begin{thebibliography}{99}
\bibitem{A84}
 G.E. Andrews,
 \textit{The Theory of Partitions},
 Cambridge University Press, 1984.
\bibitem{AAR}
 G.E. Andrews, R. Askey, R. Roy,
 \textit{Special Functions},
 Cambridge University Press, 1999.
\bibitem{FS}
 S. Formichella, A. Straub,
 Gaussian Binomial Coefficients with Negative Arguments,
 \textit{Ann. Comb.} 23~(2019)~725-748.
\bibitem{KS}
 R. Koekoek, R.F. Swarttouw,
 The Askey-scheme of hypergeometric orthogonal polynomials
  and its q-analogue,
 \href{http://arxiv.org/abs/math/9602214}{{\tt arXiv:math/9602214}}{\tt~[math.CA]}
\bibitem{K15}
  M.J. Kronenburg,
  The Binomial Coefficient for Negative Arguments,
  \href{http://arxiv.org/abs/1105.3689}{{\tt arXiv:1105.3689}}{\tt~[math.CO]}
\bibitem{MK}
  M.J. Kronenburg,
  Computation of q-Binomial Coefficients with the P(n,m) Integer Partition Function,
  \href{https://arxiv.org/abs/2205.15013}{{\tt arXiv:2205.15013}}{\tt~[math.CO]}
\bibitem{NT}
 J. Mc Laughlin,
 \textit{Monographs in Number Theory, Volume 8, Topics and Methods in q-Series},
 World Scientific, 2018.
\bibitem{S08}
  R. Sprugnoli,
  Negation of binomial coefficients,
  \textit{Discrete Math.} 308~(2008)~5070-5077.
\bibitem{WP}
E.W. Weisstein, \textit{q-Binomial Coefficient}.
From Mathworld - A Wolfram Web Resource.
\href{https://mathworld.wolfram.com/q-BinomialCoefficient.html}
{{\tt https://mathworld.wolfram.com/q-BinomialCoefficient.html}}
\bibitem{wiki2}
 Wikipedia, \textit{Gaussian binomial coefficient},\\
 \href{https://en.wikipedia.org/wiki/Gaussian\_binomial\_coefficient}
 {{\tt https://en.wikipedia.org/wiki/Gaussian\_binomial\_coefficient}}
\end{thebibliography}
\end{document}